\newcommand*{\kl}[3][]{%
\ifthenelse{\isempty{#1}}{\operatorname{D}(#2\,\|\,#3)}%
{\operatorname{D}(#2\,\|\,#3\mid#1)}%
}
\newcommand*{\triplenorm}[1]{{\left\vert\kern-0.25ex\left\vert\kern-0.25ex\left\vert #1
    \right\vert\kern-0.25ex\right\vert\kern-0.25ex\right\vert}}
\DeclareMathOperator{\var}{Var}
\DeclareMathOperator{\cov}{Cov}
\newcommand*{\rd}{\mathrm{d}}
\newcommand*{\dd}{\, \rd}
\newcommand{\R}{\mathbb{R}}
\newcommand{\eps}{\varepsilon}
\renewcommand{\phi}{\varphi}
\newtheorem{theorem}{Theorem}
\newtheorem{proposition}{Proposition}
\newtheorem{lemma}{Lemma}
\newtheorem{remark}{Remark}
\theoremstyle{definition}
\newtheorem{assumption}{Assumption}
\DeclareMathOperator*{\minimize}{\mathrm{minimize}}
\DeclareMathOperator*{\maximize}{\mathrm{maximize}}
\begin{document}

\title{An entropic generalization of Caffarelli’s contraction theorem via covariance inequalities}
\author{Sinho Chewi\thanks{Department of Mathematics at Massachusetts Institute of Technology (MIT), \texttt{schewi@mit.edu}} \and Aram-Alexandre Pooladian\thanks{Center for Data Science at New York University (NYU), \texttt{aram-alexandre.pooladian@nyu.edu}}}
\date{\today}

\maketitle

\vspace*{.1in}

\begin{abstract}
The optimal transport map between the standard Gaussian measure and an $\alpha$-strongly log-concave probability measure is $\alpha^{-1/2}$-Lipschitz, as first observed in a celebrated theorem of Caffarelli.
In this paper, we apply two classical covariance inequalities (the Brascamp--Lieb and Cram\'er--Rao inequalities) to prove a sharp bound on the Lipschitz constant of the map that arises from \textit{entropically regularized} optimal transport. In the limit as the regularization tends to zero, we obtain an elegant and short proof of Caffarelli's original result.
We also extend Caffarelli's theorem to the setting in which the Hessians of the log-densities of the measures are bounded by arbitrary positive definite commuting matrices.
\end{abstract}

\section{Introduction}\label{sec:intro}

In~\cite{caffarelli2000monotonicity}, Caffarelli proved the following seminal result.
\begin{theorem}[Caffarelli's contraction theorem]\label{thm: caf_con}
Let $P = \exp(-V)$ and $Q = \exp(-W)$ have smooth densities on $\R^d$, with $\nabla^2 V \preceq \beta_V I $ and $\nabla^2 W \succeq \alpha_W I \succ 0$. Then, the optimal transport map $\nabla \phi_0$ from $P$ to $Q$ is $\sqrt{\beta_V/\alpha_W}$-Lipschitz.
\end{theorem}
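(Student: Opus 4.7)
The plan is to introduce entropic regularization, apply covariance inequalities at the level of entropic OT, and then send the regularization $\eps \to 0$. For $\eps > 0$ let $\pi_\eps$ be the optimal coupling of the quadratic-cost entropic OT problem between $P$ and $Q$, with Sinkhorn potentials $f_\eps,g_\eps$ so that $d\pi_\eps \propto \exp((f_\eps(x) + g_\eps(y) - \|x-y\|^2/2)/\eps)\, dP\,dQ$. Define the barycentric maps $T_\eps(x) := \mathbb{E}_{\pi_\eps}[Y \mid X=x]$ and $S_\eps(y) := \mathbb{E}_{\pi_\eps}[X \mid Y=y]$; differentiating the marginal constraint and then under the integral yields $T_\eps = \mathrm{Id} - \nabla f_\eps$, $S_\eps = \mathrm{Id} - \nabla g_\eps$, and
\[
\nabla T_\eps(x) = \tfrac{1}{\eps}\Cov_{\pi_\eps(\cdot\mid x)}(Y), \qquad \nabla S_\eps(y) = \tfrac{1}{\eps}\Cov_{\pi_\eps(\cdot\mid y)}(X).
\]
Since $T_\eps \to \nabla\phi_0$ as $\eps \to 0$ by standard entropic OT theory, it suffices to bound $\sup_x \op{\nabla T_\eps(x)}$ by a quantity tending to $\sqrt{\beta_V/\alpha_W}$.

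Next I apply the two covariance inequalities to the conditionals. The conditional $\pi_\eps(\cdot\mid x)$ has density $\propto \exp(-W(y) + g_\eps(y)/\eps - \|x-y\|^2/(2\eps))$, so its negative log-Hessian in $y$ equals $\nabla^2 W(y) + (I - \nabla^2 g_\eps(y))/\eps = \nabla^2 W(y) + \nabla S_\eps(y)/\eps$, using $\nabla^2 g_\eps = I - \nabla S_\eps$. Since $\nabla S_\eps \succeq 0$ and $\nabla^2 W \succeq \alpha_W I$, Brascamp--Lieb gives
\[
\nabla T_\eps(x) \preceq \mathbb{E}_{\pi_\eps(\cdot \mid x)}\!\bigl[(\eps\alpha_W I + \nabla S_\eps(Y))^{-1}\bigr].
\]
Symmetrically, $\pi_\eps(\cdot\mid y)$ has negative log-Hessian $\nabla^2 V(x) + \nabla T_\eps(x)/\eps \preceq \beta_V I + \nabla T_\eps(x)/\eps$, and the Cram\'er--Rao inequality $\Cov_\mu \succeq (\mathbb{E}_\mu \nabla^2 U)^{-1}$ for $\mu = e^{-U}$ yields
\[
\nabla S_\eps(y) \succeq \bigl(\eps\beta_V I + \mathbb{E}_{\pi_\eps(\cdot \mid y)}[\nabla T_\eps(X)]\bigr)^{-1}.
\]

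To close the loop, set $L_\eps := \sup_x \op{\nabla T_\eps(x)}$ and $\ell_\eps := \inf_y \lambda_{\min}(\nabla S_\eps(y))$. Because the curvature bounds are scalar multiples of $I$, the two matrix inequalities collapse to $L_\eps \leq 1/(\eps\alpha_W + \ell_\eps)$ and $\ell_\eps \geq 1/(\eps\beta_V + L_\eps)$; eliminating $\ell_\eps$ yields $\alpha_W L_\eps^2 + \eps\alpha_W\beta_V L_\eps \leq \beta_V$, and hence
\[
L_\eps \leq \tfrac{1}{2}\bigl(-\eps\beta_V + \sqrt{\eps^2\beta_V^2 + 4\beta_V/\alpha_W}\bigr) \;\longrightarrow\; \sqrt{\beta_V/\alpha_W} \quad \text{as } \eps \to 0.
\]
The main obstacle is to make this bootstrap rigorous: one must guarantee a priori finiteness of $L_\eps$ for each $\eps > 0$ (so the algebra above is valid), verify the hypotheses of Brascamp--Lieb and Cram\'er--Rao on the conditionals (strong log-concavity and sufficient regularity, both guaranteed by smoothness of the Sinkhorn potentials), and justify that the uniform Lipschitz bound descends to $\nabla\phi_0$ along the convergence $T_\eps \to \nabla\phi_0$.
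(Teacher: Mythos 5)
This is essentially the paper's own proof: the covariance identity $\nabla T_\eps=\nabla^2\varphi_\eps=\eps^{-1}\cov_{\pi_\eps^x}(Y)$, the Brascamp--Lieb and Cram\'er--Rao bounds applied to the two conditionals, and the resulting quadratic inequality for $L_\eps$ reproduce Theorem~\ref{thm:main} with the identical bound. The two points you flag are resolved exactly along the lines you anticipate: finiteness of $L_\eps$ is automatic since the Brascamp--Lieb step alone already gives $\nabla T_\eps(x)\preceq(\eps\alpha_W)^{-1}I$, and the passage $\eps\searrow 0$ is made rigorous not through pointwise convergence of $T_\eps$ but by rephrasing the bound as convexity of $\frac{(\sqrt{\beta_V/\alpha_W}+\delta)\,\norm{\cdot}^2}{2}-\varphi_\eps$ and invoking the $L^1(P)$-convergence $\varphi_\eps\to\varphi_0$ of~\cite{nutz2021entropic} (hence a.s.\ convergence along a subsequence), under which convexity survives the limit.
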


Here, $\phi_0 : \R^d\to\R$ is a convex function, known as a \emph{Brenier potential}. The optimal transport map $\nabla \phi_0 : \R^d\to\R^d$ pushes forward $P$ to $Q$, in the sense that if $X$ is a random variable with law $P$, then $\nabla \phi_0(X)$ is a random variable with law $Q$. See Section~\ref{sec: background_ot} and the textbook~\cite{villani2003topics} for background on optimal transport.

Caffarelli's contraction theorem can be used to transfer functional inequalities, such as a Poincar\'e inequality, from the standard Gaussian measure on $\R^d$ to other probability measures~\cite{bakrygentilledoux2014}.
Towards this end, recent works have also constructed and studied alternative Lipschitz transport maps (e.g.~\cite{kim2012generalization,mikulincer2021brownian, mikulincer2022lipschitz, neeman2022lipschitz}), but still the properties of the original optimal transport map remain of fundamental interest, with many questions unresolved~\cite{valdimarsson2007otpotential,colombo2015lipschitz}.

Indeed, besides the application to functional inequalities, the structural properties of optimal transport maps play a fundamental role in theoretical and methodological advances in optimal transport, such as the control of the curvature of the Wasserstein space through the notion of extendible geodesics~\cite{legouicetal2019fast, ahidarlegouicparis2020barycenters}, the stability of Wasserstein barycenters~\cite{chewietal2020buresgd}, and the statistical estimation of optimal transport maps~\cite{hutter2021minimax}.

In applied domains, however, the inauspicious computational and statistical burden of solving the original optimal transport problem has instead led practitioners to consider \emph{entropically regularized} optimal transport, as pioneered in~\cite{cuturi2013sinkhorn}. In addition to its practical merits, entropic optimal transport enjoys a rich mathematical theory, rooted in its connection to the classical Schr\"odinger bridge problem~\cite{leonard2014schrodinger}, which has led to powerful applications to high-dimensional probability~\cite{ledoux2018remarks, fathi2020proof, gentiletal2020entropichwi}. As such, it is natural to study the properties of the entropic analogue of the optimal transport map.

In this paper, we prove a generalization of Caffarelli's contraction theorem to the setting of entropic optimal transport.
Namely, we study the Hessian of the \textit{entropic Brenier potential} (see Section~\ref{sec: background_eot}), which admits a representation as a covariance matrix (Lemma~\ref{lem: hessians_pi}). By applying two well-known inequalities for covariance matrices (the Brascamp--Lieb inequality and the Cram\'er--Rao inequality), we quickly deduce a sharp upper bound on the operator norm of the Hessian which holds for any value $\eps > 0$ of the regularization parameter.

As a byproduct of our analysis, by sending $\eps \searrow 0$ and appealing to recent convergence results for the entropic Brenier potentials~\cite{nutz2021entropic}, we obtain the shortest proof of Caffarelli's contraction theorem to date. Notably, our argument allows us to sidestep the regularity of the optimal transport map, which is a key obstacle in Caffarelli's original proof.

Recently, in~\cite{fathi2020proof}, Fathi, Gozlan, and Prod'homme gave a proof of Caffarelli's theorem using a surprising equivalence between Theorem~\ref{thm: caf_con} and a statement about Wasserstein projections, which was discovered through the theory of weak optimal transport~\cite{gozlanjuillet2020weakot}.
In order to verify the latter, their proof also used ideas from entropic optimal transport. In comparison, we note that our argument is more direct and also allows us to handle the case of non-zero regularization ($\eps > 0$).

To further demonstrate the applicability of our technique, in Section~\ref{scn:commuting_matrices} we prove a generalization of Caffarelli's result: if $\nabla^2 V \preceq A^{-1}$ and $\nabla^2 W \succeq B^{-1}$, where $A$ and $B$ are arbitrary commuting positive definite matrices, then the Hessian of the Brenier potential from $P$ to $Q$ is pointwise upper bounded (in the PSD ordering) by $A^{-1/2} B^{1/2}$. This result implies a remarkable extremal property of optimal transport maps between Gaussian measures, namely: the optimal transport map from $\mc N(0, A)$ to $\mc N(0, B)$ maximizes the Hessian of the Brenier potential at any point among all possible measures $P$ and $Q$ satisfying our assumptions. To the best of our knowledge, this result is new.
\section{Background}\label{sec:background}

\subsection{Assumptions}

We study probability measures $P$ and $Q$ on $\R^d$ satisfying the following mild regularity assumptions.

\begin{assumption}[Regularity conditions]
We henceforth refer to the \textit{source measure} as $P$ and the \textit{target measure} as $Q$. We say that $(P, Q)$ satisfies our regularity conditions if:
\begin{enumerate}
    \item $P$ has full support on $\R^d$ and $Q$ is supported on a convex subset of $\R^d$. Let $\Omega_Q$ denote the interior of the support of $Q$, so that $\Omega_Q$ is a convex open set.
    \item $P$ and $Q$ admit positive Lebesgue densities on $\R^d$ and $\Omega_Q$, which we can therefore be written $\exp(-V)$ and $\exp(-W)$ respectively for functions $V, W : \R^d\to\R \cup \{\infty\}$. We abuse notation and identify the measures with their densities, thus writing $P = \exp(-V)$ and $Q = \exp(-W)$.
    \item We assume that $V$ and $W$ are twice continuously differentiable on $\R^d$ and $\Omega_Q$ respectively.
\end{enumerate}
\end{assumption}

Some of these assumptions can be eventually relaxed, but they suffice for the purposes of this work. Throughout the rest of the paper and for the sake of simplicity, these regularity assumptions are assumed to hold for the probability measures under consideration.

\subsection{Optimal transport without regularization}\label{sec: background_ot}

Let $P$ and $Q$ be probability measures with finite second moment.
The \emph{optimal transport problem} is the optimization problem
\begin{align}\label{eq: kant_p}
    \minimize_{\pi \in \Pi(P,Q)}~~~\int \tfrac{1}{2}\,\norm{x-y}^2 \, \D \pi(x,y)
\end{align}
where $\Pi(P,Q)$ is the set of joint probability measures with marginals $P$ and $Q$.
The following fundamental result characterizes the optimal solution to~\eqref{eq: kant_p}.

\begin{theorem}[{Brenier's theorem}]\label{thm: brenier_thm}
Suppose that $P$ admits a density with respect to Lebesgue measure.
Then, there exists a proper, convex, lower semicontinuous function $\phi_0 : \R^d\to\R \cup \{\infty\}$ such that the optimal transport plan in~\eqref{eq: kant_p} can be written $\pi_0 = {(\id, \nabla \phi_0)}_\sharp P$. The function $\phi_0$ is called the \emph{Brenier potential}, and the mapping $\nabla \phi_0$ is called the \emph{optimal transport map} from $P$ to $Q$.
Moreover, the optimal transport map $\nabla \phi_0$ is unique up to $P$-almost everywhere equality.

The Brenier potential $\phi_0$ is obtained as the solution to the dual problem
\begin{align}\label{eq: ot_dual}
\maximize_{\phi \in \Gamma_0}~~~\int \bigl(\frac{\norm\cdot^2}{2} -\phi\bigr) \dd P + \int \bigl(\frac{\norm\cdot^2}{2} -\phi^*\bigr) \dd Q\,,
\end{align}
where $\phi^*$ is the convex conjugate to $\phi$, and $\Gamma_0$ is the set of proper, convex, lower semicontinuous functions on $\R^d$.
\end{theorem}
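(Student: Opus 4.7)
The plan is to derive Brenier's theorem from Kantorovich duality, exploiting the quadratic structure of the cost. Expanding $\tfrac12 \|x-y\|^2 = \tfrac12 \|x\|^2 - \langle x,y\rangle + \tfrac12 \|y\|^2$, the primal~\eqref{eq: kant_p} is, modulo constants depending only on the marginals' (finite) second moments, equivalent to $\max_{\pi \in \Pi(P,Q)} \int \langle x, y\rangle \dd \pi$. Applying standard Kantorovich duality for the continuous cost $c(x,y) = \tfrac12 \|x-y\|^2$ identifies the primal value with $\sup_{f, g} \bigl\{ \int f \dd P + \int g \dd Q : f(x) + g(y) \le c(x,y)\bigr\}$. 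Substituting $\phi(x) = \tfrac12 \|x\|^2 - f(x)$ and $\psi(y) = \tfrac12 \|y\|^2 - g(y)$ then recasts the pointwise constraint as $\phi(x) + \psi(y) \ge \langle x, y\rangle$, with the objective taking the form in~\eqref{eq: ot_dual}.

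Next, I would restrict to a conjugate pair. For any admissible $(\phi, \psi)$, the Legendre conjugate $\phi^*(y) = \sup_x\{\langle x, y\rangle - \phi(x)\}$ satisfies $\phi^*(y) \le \psi(y)$ pointwise; replacing $\psi$ by $\phi^*$ therefore preserves admissibility and weakly increases the objective (since $-\phi^* \ge -\psi$). Applying the same operation to $\phi$ produces $\phi^{**}$, which is proper, convex, and lower semicontinuous. Hence we may restrict to $\phi \in \Gamma_0$ with $\psi = \phi^*$, recovering exactly~\eqref{eq: ot_dual}. Existence of a maximizer $\phi_0 \in \Gamma_0$ follows by a standard direct-method argument: normalize $\phi_0(x_0) = 0$ at a fixed point, obtain a uniform bound on a maximizing sequence, and conclude by equicontinuity of convex functions on compact sets together with lower semicontinuity of the dual functional.

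With a maximizer $\phi_0$ in hand, strong duality and complementary slackness force any optimal coupling $\pi_0$ of~\eqref{eq: kant_p} to be concentrated on the set $\{(x,y) : \phi_0(x) + \phi_0^*(y) = \langle x, y\rangle\}$. By the Fenchel--Young equality, this set coincides with $\{(x,y) : y \in \partial \phi_0(x)\}$. Here is where absolute continuity of $P$ enters: a proper lower semicontinuous convex function on $\R^d$ is differentiable outside a set of Lebesgue measure zero, hence $P$-almost everywhere, so $\partial \phi_0(x) = \{\nabla \phi_0(x)\}$ for $P$-a.e.\ $x$. Consequently $\pi_0$ is supported on the graph of $\nabla \phi_0$, and since its first marginal equals $P$, we conclude $\pi_0 = (\id, \nabla \phi_0)_\sharp P$. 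This simultaneously yields the push-forward representation and, since the argument applies to any optimal coupling, establishes $P$-a.e.\ uniqueness of $\nabla \phi_0$.

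The main technical obstacle is controlling the effective domain of $\phi_0$ carefully: one must verify that $\phi_0$ is finite (equivalently, that its subdifferential is nonempty) on a set of full $P$-measure, and that the resulting map $\nabla \phi_0$ takes values in $\supp(Q)$ for $P$-a.e.\ $x$. Both facts are standard consequences of convex analysis combined with the marginal constraint, but they are where the assumption that $P$ admits a Lebesgue density truly does the work, and they motivate the regularity hypotheses imposed elsewhere in the paper.
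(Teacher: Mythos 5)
This theorem is quoted in the paper as a classical background result (with a pointer to Villani's textbook); the paper never proves it, so there is no in-paper argument to compare against. Your sketch is the standard duality proof: reduce the quadratic cost to the correlation functional, pass from Kantorovich potentials to the Legendre-conjugate pair $(\phi,\phi^*)$, and use Fenchel--Young plus Lebesgue-a.e.\ differentiability of convex functions (together with $P \ll \mathrm{Leb}$, and the fact that the boundary of the convex set $\mathrm{dom}\,\phi_0$ is Lebesgue-null) to conclude that any optimal plan is carried by the graph of $\nabla\phi_0$, which gives both the representation $\pi_0 = (\mathrm{id},\nabla\phi_0)_\sharp P$ and the $P$-a.e.\ uniqueness of the map. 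That outline is correct and is essentially how the result is proved in the literature.

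The one step you treat too lightly is the existence of a dual maximizer. With only finite second moments and possibly unbounded supports, ``normalize at a point, get uniform bounds, use equicontinuity of convex functions on compacts'' does not by itself go through: along a maximizing sequence the effective domains can degenerate, the limit need not be proper or admissible, and upper semicontinuity of the dual functional requires some uniform integrability of the potentials against $P$ and $Q$. This attainment question is precisely the hard technical core of the theorem. The standard ways around it are either (i) to bypass the direct method entirely: take an optimal plan $\pi_0$ (weak compactness plus lower semicontinuity), invoke cyclical monotonicity of its support, and build $\phi_0$ explicitly via Rockafellar's construction, so that the dual optimality of $\phi_0$ comes out as a by-product; or (ii) to run the direct method on $c$-concave (here: convex, conjugate-closed) pairs with careful truncation/moment estimates. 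You also correctly flag, but defer, the need to check $P(\mathrm{dom}\,\phi_0)=1$ and integrability of $\phi_0$, $\phi_0^*$ so that complementary slackness is legitimate; in route (i) these come for free from the construction. So: right architecture, but the attainment step as written is a genuine gap rather than a routine verification.
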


%In particular, $\nabla \phi_0$ \emph{pushes forward} $P$ to $Q$, in the sense that if we have a random variable $X$ with law $P$, then $\nabla \phi_0(X)$ has law $Q$.
We refer to~\cite{villani2003topics} for further background.
\subsection{Optimal transport with entropic regularization}\label{sec: background_eot}
\textit{Entropic optimal transport} is the problem that arises when we add the {Kullback--Liebler (KL) divergence}, denoted $D_\text{KL}(\cdot\mmid\cdot)$, as a regularizer to~\eqref{eq: kant_p}:
\begin{align}\label{eq: eot_p}
    \minimize_{\pi \in \Pi(P,Q)}~~~\int\tfrac{1}{2}\,\norm{x-y}^2 \dd \pi(x,y) + \eps \, D_{\rm KL}(\pi \mmid P\otimes Q)\,.
\end{align}
The following statement characterizes the solution to~\eqref{eq: eot_p}~\cite{Csi75, PeyCut19, nutz2021entropic}. 
\begin{theorem}[Entropic optimal transport]
Let $P$ and $Q$ be probability measures on $\R^d$ and fix $\eps > 0$. Then there exists a unique solution $\pi_\eps \in \Pi(P, Q)$ to~\eqref{eq: eot_p}.
Moreover, $\pi_\eps$ has the form
\begin{align}\label{eq:entropic_plan}
    \pi_\eps(\D x,\D y) = \exp\Bigl(\frac{f_\eps(x) + g_\eps(y) - \frac{1}{2} \, \norm{x-y}^2}{\eps}\Bigr) \, P(\D x) \, Q(\D y)\,,
\end{align}
where $(f_\eps,g_\eps)$ are maximizers for the dual problem
\begin{align}
    \maximize_{(f,g) \in L^1(P) \times L^1(Q)}~~~\int f \dd P + \int g \dd Q 
    &-\eps \iint e^{(f(x)+g(y)-\frac{1}{2} \, \norm{x-y}^2)/\eps}\dd P(x)\dd Q(y) + \eps\,.
\end{align}
\end{theorem}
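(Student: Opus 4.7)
The statement bundles three assertions: existence and uniqueness of the primal minimizer $\pi_\eps$, the Gibbs/Schr\"odinger form of $\pi_\eps$, and strong duality with the stated dual problem. My plan is a standard convex-analytic one: establish existence and uniqueness by direct arguments on $\Pi(P,Q)$, derive the explicit form by taking admissible variations at the minimizer, and deduce the dual characterization via Fenchel--Rockafellar duality.

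\textbf{Existence and uniqueness.} The set $\Pi(P,Q)$ is convex and weakly compact (tightness is automatic from the fixed marginals). Assuming $P$ and $Q$ have finite second moments, the quadratic cost term is continuous under weak convergence, and the functional $\eps\, D_{\mathrm{KL}}(\cdot\mmid P\otimes Q)$ is strictly convex and weakly lower semicontinuous. Since $P\otimes Q\in\Pi(P,Q)$ is a feasible point with finite objective value, the infimum is attained at some $\pi_\eps$, and strict convexity of $D_{\mathrm{KL}}$ forces uniqueness.

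\textbf{Form of the plan.} To identify $\pi_\eps$, I would consider signed perturbations $\pi_\eps + s\mu$, where $\mu$ has zero marginals and a bounded density with respect to $\pi_\eps$, so that $\pi_\eps + s\mu \in \Pi(P,Q)$ for $\lvert s\rvert$ small. Differentiating the objective in~\eqref{eq: eot_p} at $s=0$ and using $\int \rd\mu = 0$ yields the first-order condition
\begin{equation*}
\int \Bigl(\tfrac{1}{2}\,\lVert x-y\rVert^2 + \eps \log \tfrac{\rd \pi_\eps}{\rd(P\otimes Q)}\Bigr) \rd \mu(x,y) = 0
\end{equation*}
for every admissible $\mu$. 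A standard tensor-product argument (testing against $\mu = (\delta_{x_1}-\delta_{x_2})\otimes(\delta_{y_1}-\delta_{y_2})$ and its convolutions) shows that the set of functions annihilated by all such $\mu$ consists precisely of the additively separable maps $(x,y)\mapsto \tilde f(x)+\tilde g(y)$. Solving for the density then produces the Gibbs form~\eqref{eq:entropic_plan}.

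\textbf{Duality and the main obstacle.} Finally, the dual characterization arises by recognizing $(f,g)$ as Lagrange multipliers for the marginal constraints: minimizing the Lagrangian over unconstrained nonnegative measures produces the $\log$--$\sum$--$\exp$ penalty, and strong duality follows from Fenchel--Rockafellar, with the qualification condition supplied by the feasible reference $P\otimes Q$. I expect the main technical obstacle to be integrability: the formal potentials $(f_\eps,g_\eps)$ need not be bounded, and one must justify that they can be chosen in $L^1(P)\times L^1(Q)$ so that both the Gibbs density and the dual objective are well-defined and the variational argument above is rigorous. This is typically handled either by a Sinkhorn-type alternating minimization, which converges in a suitable functional space, or through the refined compactness arguments of~\cite{Csi75,nutz2021entropic}; absent this step, the preceding calculations are only formal.
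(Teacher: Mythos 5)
The paper does not prove this theorem at all: it is quoted as a background result with the citations \cite{Csi75, PeyCut19, nutz2021entropic}, so there is no in-paper argument to compare against. Your outline is the standard route taken in those references (existence and uniqueness from weak compactness of $\Pi(P,Q)$ plus strict convexity and lower semicontinuity of the entropy term; the Gibbs form from first-order optimality; duality via Fenchel--Rockafellar), and at the level of a sketch it is the right plan. One small point: the quadratic cost is in general only lower semicontinuous under weak convergence, but on $\Pi(P,Q)$ with fixed finite-second-moment marginals the bound $\|x-y\|^2 \le 2\,\|x\|^2 + 2\,\|y\|^2$ gives the uniform integrability needed, so your existence argument is fine once this is said.

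The genuine gaps are the two you partially flag, and they are not cosmetic. First, the test measures $\mu = (\delta_{x_1}-\delta_{x_2})\otimes(\delta_{y_1}-\delta_{y_2})$ are not admissible perturbations in your own class: they are not absolutely continuous with bounded density with respect to $\pi_\eps$, and $\pi_\eps + s\mu$ need not be a nonnegative measure, so the step ``annihilated by all admissible $\mu$ $\Rightarrow$ additively separable'' does not follow as stated. The rigorous version of this step is a measurable additive-decomposition result (\`a la R\"uschendorf--Thomsen) or, more commonly, one avoids it entirely by first producing the dual optimizers and then reading off the density. Second, dual attainment in $L^1(P)\times L^1(Q)$ --- the existence and integrability of the Schr\"odinger potentials $(f_\eps,g_\eps)$ --- is the actual content of the theorem, not a technicality to be appended at the end: Fenchel--Rockafellar gives equality of values but not existence of maximizers in this unbounded, non-compact setting, and without attainment both the Gibbs form~\eqref{eq:entropic_plan} and the optimality conditions~\eqref{eq: opt_cond1}--\eqref{eq: opt_cond2} used later in the paper remain formal. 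You correctly defer this to Sinkhorn/Csisz\'ar-type arguments or to~\cite{nutz2021entropic}, which is exactly what the paper does by citation; but as a self-contained proof, that deferred step is where the real work lies.
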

The constraint that $\pi_\eps$ has marginals $P$ and $Q$ implies the following dual optimality conditions for $(f_\eps,g_\eps)$ (see \cite{mena2019statistical,nutz2021entropic}):
\begin{align}
    &f_\eps(x) = -\eps\log\int e^{(g_\eps(y) - \frac{1}{2} \, \norm{x-y}^2)/\eps}\dd Q(y) \qquad (x \in \R^d)\,, \label{eq: opt_cond1}\\
    &g_\eps(y) = -\eps\log\int e^{(f_\eps(x) - \frac{1}{2} \, \norm{x-y}^2)/\eps}\dd P(x) \qquad (y \in \R^d)\,. \label{eq: opt_cond2}
\end{align}
In particular, $f_\eps$ and $g_\eps$ are smooth. In this work, it is more convenient to work with the \textit{entropic Brenier potentials}, defined as
\begin{align}\label{eq: ent_brenier}
    (\phi_\eps,\psi_\eps) := (\tfrac12\,\|\cdot\|^2 - f_\eps,\;\tfrac12\,\|\cdot\|^2 - g_\eps)\,.
\end{align}
Since $(f_\varepsilon, g_\varepsilon)$ are only unique up to adding a constant to $f_\varepsilon$ and subtracting the same constant from $g_\varepsilon$, we fix the normalization convention $\int f_\varepsilon \, \D P = \int g_\varepsilon \, \D Q$. Under this condition, it was shown in~\cite{nutz2021entropic}
that we have convergence to the Brenier potential $\varphi_\eps \to \varphi_0$ as $\eps \searrow 0$.

Adopting this new notation, with $P = \exp(-V)$ and $Q = \exp(-W)$, we can rewrite the entropic optimal plan as
\begin{align*}
    \pi_\eps(\D x,\D y) = \exp\Bigl( - \frac{\varphi_\eps(x) + \psi_\eps(y) - \langle x, y\rangle }{\eps} - V(x) - W(y)\Bigr) \, \D x \, \D y\,.
\end{align*}

The entropic Brenier potentials were first introduced to develop a computationally tractable estimator of the optimal transport map $\nabla\phi_0$~\cite{seguy2017large,pooladian2021entropic,pooladian2022debiaser}.
Indeed, this is motivated by the following observation, which acts as an entropic version of Brenier's theorem.
Write $\pi_\eps^{Y\mid X=x}$ for the conditional distribution of $Y$ given $X=x$ for $(X,Y) \sim \pi_\eps$, and similarly define $\pi_\eps^{X\mid Y=y}$.
Then, by~\cite[Proposition 1]{pooladian2021entropic}, $\nabla \phi_\eps$ is the barycentric projection
\begin{equation}\label{eq: t_eps_expectation}
    \nabla \phi_\eps(x)
    = \int y \, \D \pi_\eps^{Y\mid X=x}(y)\,.
    %= \E_{Y\sim \pi_\eps^{Y\mid X=x}}[Y]\,.
\end{equation}
For clarity of exposition, we abuse notation and abbreviate $\pi_\eps^{Y\mid X=x}$ by $\pi_\eps^x$ and $\pi_\eps^{X\mid Y=y}$ by $\pi_\eps^y$ when there is no danger of confusion.

The following lemma is a straightforward computation using~\eqref{eq:entropic_plan},~\eqref{eq: opt_cond1}, and~\eqref{eq: opt_cond2}.

\begin{lemma}\label{lem: hessians_pi}
It holds that
\begin{align*}
    & \nabla^2\phi_\eps(x) = \eps^{-1}\cov_{Y\sim\pi_\eps^x}(Y)\,, \qquad \text{ and } \qquad \nabla^2\psi_\eps(y) = \eps^{-1}\cov_{X\sim\pi_\eps^y}(X)\,.
\end{align*}
In particular, both $\varphi_\eps$ and $\psi_\eps$ are convex.
Moreover, under our regularity conditions,
\begin{align*}
    & \nabla_y^2\log(1/\pi_\eps^x)(y) = \eps^{-1}\, \nabla^2\psi_\eps(y) + \nabla^2W(y)\,,\\
    & \nabla_x^2\log(1/\pi_\eps^y)(x) = \eps^{-1}\, \nabla^2\phi_\eps(x) + \nabla^2V(x)\,.
\end{align*}
\end{lemma}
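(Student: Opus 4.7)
The plan is to verify the four identities by direct calculation from the dual optimality conditions~\eqref{eq: opt_cond1} and~\eqref{eq: opt_cond2} together with the explicit form~\eqref{eq:entropic_plan} of the entropic plan. The key reduction is to rewrite $\phi_\eps$ as a log-Laplace transform (cumulant generating function), after which the Hessian identities become standard cumulant computations.

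Substituting $f_\eps = \tfrac12\|\cdot\|^2 - \phi_\eps$ and $g_\eps = \tfrac12\|\cdot\|^2 - \psi_\eps$ into~\eqref{eq: opt_cond1} and using the identity $\tfrac12\|y\|^2 - \tfrac12\|x-y\|^2 = \langle x,y\rangle - \tfrac12\|x\|^2$, the quadratic term in $x$ cancels and one obtains
\begin{align*}
\phi_\eps(x) = \eps \log \int \exp\Bigl(\frac{\langle x, y\rangle - \psi_\eps(y)}{\eps}\Bigr)\, Q(\D y).
\end{align*}
From~\eqref{eq:entropic_plan}, the same rewrite yields
\begin{align*}
\pi_\eps^x(\D y) = \exp\Bigl(\frac{\langle x, y\rangle - \psi_\eps(y) - \phi_\eps(x)}{\eps}\Bigr)\, Q(\D y),
\end{align*}
so $\phi_\eps$ is $\eps$ times the log-partition function of the exponential tilt of $e^{-\psi_\eps/\eps}\,Q$ by the natural parameter $x/\eps$, with associated Gibbs measure $\pi_\eps^x$. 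Differentiating twice under the integral sign (justified by the regularity hypothesis and standard dominated convergence) gives the usual mean-and-covariance formulas: the first derivative recovers the barycentric identity $\nabla\phi_\eps(x) = \mathbb{E}_{\pi_\eps^x}[Y]$ already recorded in~\eqref{eq: t_eps_expectation}, while the second derivative yields $\nabla^2\phi_\eps(x) = \eps^{-1}\cov_{Y\sim\pi_\eps^x}(Y)$. The symmetric argument applied to~\eqref{eq: opt_cond2} gives the dual expression for $\nabla^2\psi_\eps$, and positive semidefiniteness of covariance matrices delivers the claimed convexity.

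For the remaining two identities, I would combine~\eqref{eq:entropic_plan} with the marginal densities $P(\D x) = e^{-V(x)}\,\D x$ and $Q(\D y) = e^{-W(y)}\,\D y$ to read off the Lebesgue density of $\pi_\eps^x$ as
\begin{align*}
\pi_\eps^x(y) = \exp\Bigl(-\frac{\phi_\eps(x) + \psi_\eps(y) - \langle x,y\rangle}{\eps} - W(y)\Bigr).
\end{align*}
Taking $-\log$ and then the $y$-Hessian kills the terms affine in $y$ and leaves exactly $\eps^{-1}\nabla^2\psi_\eps(y) + \nabla^2 W(y)$. The mirror identity for $\log(1/\pi_\eps^y)$ follows from the symmetric density. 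There is no real obstacle here; the entire lemma is bookkeeping, and the only subtlety is justifying differentiation under the integral sign in the log-Laplace step, which is routine under the stated smoothness and integrability.
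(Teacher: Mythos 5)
Your proposal is correct and follows exactly the computation the paper has in mind: the paper gives no separate proof, stating only that the lemma is ``a straightforward computation using~\eqref{eq:entropic_plan},~\eqref{eq: opt_cond1}, and~\eqref{eq: opt_cond2}'', and your log-Laplace/exponential-tilt rewriting of $\phi_\eps$, the cumulant identities for $\nabla\phi_\eps$ and $\nabla^2\phi_\eps$, and the direct reading of the conditional density $\pi_\eps^x(y) = \exp\bigl(-\eps^{-1}(\phi_\eps(x)+\psi_\eps(y)-\langle x,y\rangle) - W(y)\bigr)$ are precisely that computation. No gaps; the differentiation under the integral sign is justified as you say, since the log-partition function is finite on all of $\R^d$.
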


\subsection{Covariance inequalities}

In our proofs, we make use of the following key inequalities.

\begin{lemma}\label{lem:key}
Let $P = \exp(-V)$ be a probability measure on $\R^d$ and assume that $V$ is twice continuously differentiable on the interior of its domain.
Then, the following hold.
\begin{enumerate}
    \item (Brascamp--Lieb inequality) If in addition we assume that $P$ is strictly log-concave, then it holds that $\cov_{X\sim P}(X) \preceq \E_{X\sim P}[{(\nabla^2 V(X))}^{-1}]$.
    \item (Cram\'er--Rao inequality) $\cov_{X\sim P}(X) \succeq {(\E_{X\sim P}[\nabla^2 V(X)])}^{-1}$.
\end{enumerate}
\end{lemma}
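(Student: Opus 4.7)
The plan is to handle the two bounds separately: the matrix Brascamp--Lieb inequality I would deduce from its scalar variance version applied to linear test functions $f(x) = \langle v, x\rangle$ for arbitrary $v \in \R^d$, while the Cram\'er--Rao bound I would prove directly in matrix form via a Schur-complement argument.

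For the Cram\'er--Rao inequality, the plan is to use integration by parts on $\nabla \econst^{-V} = -(\nabla V)\,\econst^{-V}$ (together with sufficient decay at infinity) to establish three identities: $\E_P[\nabla V(X)] = 0$, $\E_P[\nabla V(X)\, X^\top] = I$, and the Fisher identity $\cov_P(\nabla V(X)) = \E_P[\nabla^2 V(X)]$. Stacking $X - \E_P X$ and $\nabla V(X)$ as the components of a single mean-zero random vector in $\R^{2d}$, positive semidefiniteness of its $2d \times 2d$ covariance matrix forces
\begin{align*}
\begin{pmatrix} \cov_P(X) & I \\ I & \E_P[\nabla^2 V(X)] \end{pmatrix} \;\succeq\; 0,
\end{align*}
and a Schur-complement computation immediately yields $\cov_P(X) \succeq {(\E_P[\nabla^2 V(X)])}^{-1}$.

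For Brascamp--Lieb I would use the classical Helffer--Sj\"ostrand (Poisson-equation) argument. Let $L = \Delta - \nabla V \cdot \nabla$ be the Langevin generator reversible for $P$, and let $f \in L^2(P)$ be centered. Strict log-concavity of $P$ furnishes a spectral gap and hence a solution $g$ to $-Lg = f$, with $\var_P(f) = \int fg \dd P = \int \two{\nabla g}^2 \dd P$ by integration by parts. Taking the gradient of the Poisson equation and using the commutator identity $\nabla(Lg) = L(\nabla g) - \nabla^2 V\, \nabla g$ (with $L$ acting componentwise) gives $\nabla f = -L(\nabla g) + \nabla^2 V\, \nabla g$. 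Pairing with $\nabla g$ and integrating against $P$ yields
\begin{align*}
\var_P(f) \;=\; \int \langle \nabla g, \nabla f\rangle \dd P \;=\; \sum_{i=1}^d \int \two{\nabla \partial_i g}^2 \dd P + \int \langle \nabla^2 V\, \nabla g, \nabla g\rangle \dd P \;\geq\; \int \langle \nabla^2 V\, \nabla g, \nabla g\rangle \dd P.
\end{align*}
A Cauchy--Schwarz applied to $\int \langle \nabla g, \nabla f\rangle \dd P$ under the bilinear form induced by $\nabla^2 V$ then gives $\var_P(f)^2 \leq \var_P(f) \cdot \int \langle {(\nabla^2 V)}^{-1} \nabla f, \nabla f\rangle \dd P$, and dividing through produces the scalar Brascamp--Lieb bound. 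Testing it against $f(x) = \langle v, x\rangle$ for every $v \in \R^d$ recovers the matrix form.

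The main technical obstacle, as is typical for Brascamp--Lieb, sits in the existence and regularity theory for the Poisson solution $g$ and in justifying the vanishing of boundary terms in each of the integration-by-parts steps. Under our strict log-concavity and $C^2$ hypotheses these verifications are standard but not entirely trivial, and in a paper one would most likely cite the statement of both inequalities from standard references rather than reproduce these details.
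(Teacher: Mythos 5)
Your proof is correct, but it takes a different route from the paper on both halves. For the Cram\'er--Rao bound, the paper's appendix proves the functional inequality $\var_P h \ge \langle \E_P \nabla h, (\E_P \nabla^2 V)^{-1}\,\E_P \nabla h\rangle$ for general test functions $h$, via integration by parts plus a Cauchy--Schwarz step, and then specializes to $h = \langle e,\cdot\rangle$; you instead run the classical statistical argument directly in matrix form, stacking $X - \E_P X$ with the score $\nabla V(X)$, using the identities $\E_P[\nabla V]=0$, $\E_P[\nabla V\, X^\top]=I$, $\cov_P(\nabla V)=\E_P[\nabla^2 V]$, and taking a Schur complement. The two are morally the same Cauchy--Schwarz (yours packages it as positive semidefiniteness of a $2d\times 2d$ covariance), but the paper's version yields a slightly more general variance inequality, while yours gets the matrix statement in one step with no approximation of test functions. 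For Brascamp--Lieb, the paper simply cites standard references, whereas you sketch the Helffer--Sj\"ostrand/Poisson-equation proof; your sketch is sound (the commutator identity and the final Cauchy--Schwarz under the $\nabla^2 V$-weighted form are right, and testing against linear $f$ gives the matrix form), though note two small points: the intermediate claim $\var_P(f) = \int f g \dd P$ is not correct as written (the identity you actually use, $\var_P(f) = \int \langle \nabla f, \nabla g\rangle \dd P$, is the right one, so nothing downstream breaks), and ``strict log-concavity furnishes a spectral gap'' is imprecise --- pointwise $\nabla^2 V \succ 0$ without a uniform lower bound does not give a gap by Bakry--\'Emery, so one should invoke the Poincar\'e inequality for log-concave measures (Bobkov) or an approximation argument, consistent with your stated intention to cite the literature for these technicalities, which is exactly what the paper does.
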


The Brascamp--Lieb inequality is classical, and we refer readers to~\cite{bobkovledoux2000brunnmintobrascamplieblsi, bakrygentilledoux2014, cordero2017transport} for several proofs. To make our exposition more self-contained, we provide a proof of the Cram\'er--Rao inequality in the appendix.
\section{Main theorem}

We now state and prove our main theorem.

\begin{theorem}\label{thm:main}
    Let $P = \exp(-V)$ and $Q = \exp(-W)$.
    \begin{enumerate}
        \item Suppose that $(P, Q)$ satisfy our regularity assumptions, as well as
        \begin{align*}
            \nabla^2 V \preceq \beta_V I\,, \qquad\text{and}\qquad \nabla^2 W \succeq \alpha_W I \succ 0\,.
        \end{align*}
        Then, for every $\varepsilon > 0$ and all $x\in \R^d$, the Hessian of the entropic Brenier potential satisfies
        \begin{align*}
            \nabla^2 \varphi_\eps(x)
            \preceq \frac{1}{2} \, \bigl(\sqrt{4\beta_V/\alpha_W + \varepsilon^2 \beta_V^2} - \varepsilon \beta_V\bigr)\, I\,.
        \end{align*}
        \item Suppose that $(Q, P)$ satisfy our regularity assumptions, as well as
        \begin{align*}
            \nabla^2 V \succeq \alpha_V I \succ 0\,, \qquad\text{and}\qquad \nabla^2 W \preceq \beta_W I\,.
        \end{align*}
        Then, for every $\varepsilon > 0$ and all $x\in \Omega_P := \on{int}(\on{supp}(P))$, the Hessian of the entropic Brenier potential satisfies
    \begin{align*}
        \nabla^2 \varphi_\eps(x)
        &\succeq \frac{1}{2} \, \bigl(\sqrt{4\alpha_V/\beta_W + \varepsilon^2 \alpha_V^2} - \varepsilon \alpha_V\bigr) \, I\,.
        \end{align*}
    \end{enumerate}
\end{theorem}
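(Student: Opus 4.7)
My strategy is to apply the Brascamp--Lieb and Cram\'er--Rao inequalities from Lemma~\ref{lem:key} to the conditional distributions $\pi_\eps^x$ and $\pi_\eps^y$ of the entropic plan, using Lemma~\ref{lem: hessians_pi} to convert Hessians of the entropic Brenier potentials into conditional covariances.

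For Part~1, note first that $\pi_\eps^x$ is $\alpha_W$-strongly log-concave, since $\nabla_y^2\log(1/\pi_\eps^x) = \eps^{-1}\nabla^2\psi_\eps + \nabla^2 W \succeq \alpha_W I$ (using convexity of $\psi_\eps$). Brascamp--Lieb applied to $\pi_\eps^x$ thus yields
\begin{align*}
\nabla^2 \varphi_\eps(x) \preceq \eps^{-1}\,\EE_{Y \sim \pi_\eps^x}\bigl[(\eps^{-1}\nabla^2\psi_\eps(Y) + \nabla^2 W(Y))^{-1}\bigr]\,.
\end{align*}
Dually, Cram\'er--Rao applied to $\pi_\eps^y$ together with $\nabla^2 V \preceq \beta_V I$ gives
\begin{align*}
\nabla^2 \psi_\eps(y) \succeq \bigl(\EE_{X \sim \pi_\eps^y}[\nabla^2\varphi_\eps(X)] + \eps\beta_V I\bigr)^{-1}\,.
\end{align*}
Setting $M := \sup_x \op{\nabla^2\varphi_\eps(x)}$ and substituting the second bound into the first (also invoking $\nabla^2 W \succeq \alpha_W I$) reduces the chain to the scalar inequality $M \leq (M+\eps\beta_V)/(1+\eps\alpha_W(M+\eps\beta_V))$, equivalent to $\alpha_W M^2 + \eps\alpha_W\beta_V M - \beta_V \leq 0$; its positive root is exactly the claimed upper bound. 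Part~2 is handled symmetrically, with the roles of Brascamp--Lieb and Cram\'er--Rao exchanged: Cram\'er--Rao on $\pi_\eps^x$ together with $\nabla^2 W \preceq \beta_W I$ lower bounds $\nabla^2\varphi_\eps$, while Brascamp--Lieb on $\pi_\eps^y$ (which is $\alpha_V$-strongly log-concave because $\nabla^2 V \succeq \alpha_V I$) upper bounds $\nabla^2\psi_\eps$; setting $m := \inf_x \lambda_{\min}(\nabla^2\varphi_\eps(x))$ then leads to the companion quadratic inequality $\beta_W m^2 + \eps\alpha_V\beta_W m - \alpha_V \geq 0$.

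The remaining subtlety is ensuring these scalar extrema lie in the range where the quadratic inequalities are informative: $M < \infty$ in Part~1 and $m > 0$ in Part~2, since otherwise the uniform substitution back into the conditional expectations is invalid. Both follow from cheap preliminary applications of Brascamp--Lieb. For Part~1, using only $\nabla^2\psi_\eps \succeq 0$ in the Brascamp--Lieb bound above yields the uniform estimate $\nabla^2\varphi_\eps \preceq (\eps\alpha_W)^{-1}I$, so $M$ is finite. For Part~2, using $\nabla^2\varphi_\eps \succeq 0$ analogously gives $\nabla^2\psi_\eps \preceq (\eps\alpha_V)^{-1}I$ uniformly, and one pass through the Cram\'er--Rao step then shows $m \geq \eps\alpha_V/(1+\eps^2\alpha_V\beta_W) > 0$. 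With these preliminaries in hand, the quadratic inequality selects the claimed sharp root in each case.
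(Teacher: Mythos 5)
Your proposal is correct and follows essentially the same route as the paper: Brascamp--Lieb on $\pi_\eps^x$ combined with Cram\'er--Rao on $\pi_\eps^y$ (roles swapped for the lower bound), then taking the supremum/infimum of the extreme eigenvalue and solving the resulting scalar quadratic inequality, whose positive root is the stated bound. Your preliminary crude bounds ($M \le (\eps\alpha_W)^{-1}$ and $m>0$) are a harmless extra rigor step that the paper handles implicitly via $L_\eps$ finite and $\ell_\eps \ge 0$.
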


Observe that as $\varepsilon \searrow 0$, we formally expect the following bounds on the Brenier potential:
\begin{align*}
    \sqrt{\alpha_V/\beta_W}\, I \preceq \nabla^2 \varphi_0(x) \preceq \sqrt{\beta_V/\alpha_W} \, I\,.
\end{align*}
In particular, this recovers Caffarelli's contraction theorem (Theorem~\ref{thm: caf_con}).
We make this intuition rigorous below by appealing to convergence results for the entropic potentials as the regularization parameter $\varepsilon$ tends to zero.

\begin{proof}[Proof of Theorem~\ref{thm:main}]
\textbf{Upper bound.}
Fix $x \in \R^d$. Recall from Lemma~\ref{lem: hessians_pi} that
\begin{align*}
    \nabla^2\varphi_\eps(x) = \eps^{-1} \cov_{Y \sim \pi_\eps^x}(Y)\,.
\end{align*}
By an application of the Brascamp{--}Lieb inequality, this results in the upper bound
\begin{align*}
    \nabla^2\varphi_\eps(x) &= \eps^{-1}\cov_{Y\sim \pi_\eps^x}(Y) \\
    &\preceq \eps^{-1}\E_{Y \sim \pi_\eps^x}\bigl[ \bigl(\eps^{-1}\,\nabla^2\psi_\eps(Y) + \nabla^2 W(Y)\bigr)^{-1} \bigr] \\
    %&= \E_{Y \sim \pi_\eps^x}\brac{ \pran{\nabla^2\psi_\eps(y) + \eps \nabla^2 W(Y)}^{-1}}\\
    &\preceq \E_{Y \sim \pi_\eps^x}\bigl[ \bigl(\nabla^2\psi_\eps(Y) + \eps \alpha_W I\bigr)^{-1}\bigr]\,,
\end{align*}
where in the last inequality we also used the lower bound on the spectrum of $\nabla^2W$.
%To further upper-bound this quantity, it suffices to lower-bound the inner covariance matrix.
Next, using Lemma~\ref{lem: hessians_pi} and the Cram\'er--Rao inequality (Lemma~\ref{lem:key}), we obtain the lower bound
\begin{align*}
    \nabla^2 \psi_\eps(Y)
    &= \eps^{-1}\cov_{X\sim \pi_\eps^Y}(X) \\
    &\succeq \eps^{-1} \, \bigl(\E_{X \sim \pi_\eps^Y}\bigl[ \eps^{-1}\,\nabla^2\varphi_\eps(X) + \nabla^2V(X) \bigr] \bigr)^{-1} \\
    &\succeq \bigl(\E_{X \sim \pi_\eps^Y}\bigl[ \nabla^2\varphi_\eps(X) + \eps \beta_V I  \bigr]\bigr)^{-1}\,,
\end{align*}
where we used the upper bound on the spectrum of $\nabla^2 V$. Combining these inequalities,
\begin{align*}
    \nabla^2 \varphi_\eps(x) &\preceq \E_{Y \sim \pi_\eps^x}\Bigl[ \Bigl( \bigl(\E_{X \sim \pi_\eps^Y}\bigl[ \nabla^2\varphi_\eps(X) + \eps\beta_V I  \bigr]\bigr)^{-1} + \eps\alpha_WI \Bigr)^{-1}\Bigr]\,.
\end{align*}

Now, define the quantity
\begin{align*}
    L_\eps := \sup_{x\in\R^d} \lambda_{\max}\bigl(\nabla^2\varphi_\varepsilon(x)\bigr)\,.
\end{align*}
Then, we have shown
\begin{align*}
    \lambda_{\max}\bigl(\nabla^2 \varphi_\varepsilon(x)\bigr)
    &\le \bigl((L_\eps + \varepsilon \beta_V)^{-1} + \varepsilon \alpha_W\bigr)^{-1}\,.
\end{align*}
Taking the supremum over $x \in \R^d$,
\begin{align*}
    L_\eps &\le \bigl((L_\eps + \varepsilon \beta_V)^{-1} + \varepsilon \alpha_W\bigr)^{-1}\,.
\end{align*}
Solving the inequality yields
\begin{align}\label{eq:Leps_bd}
    L_\eps \leq \frac{1}{2} \,\bigl(\sqrt{4\beta_V/\alpha_W + \eps^2\beta_V^2} - \eps\beta_V\bigr)\,.
\end{align}

\textbf{Lower bound.} The lower bound argument is symmetric, but we give the details for completeness. Using Lemma~\ref{lem: hessians_pi} and the Cram\'er--Rao inequality (Lemma~\ref{lem:key}),
\begin{align*}
    \nabla^2 \varphi_\varepsilon(x)
    &= \eps^{-1} \cov_{Y\sim \pi_\eps^x}(Y) \\
    &\succeq \eps^{-1} \, \bigl(\E_{Y\sim\pi_\eps^x}\bigl[ \eps^{-1} \, \nabla^2 \psi_\eps(Y) + \nabla^2 W(Y) \bigr]\bigr)^{-1} \\
    &\succeq \bigl(\E_{Y\sim\pi_\eps^x}\bigl[ \nabla^2 \psi_\eps(Y) + \eps \beta_W I\bigr]\bigr)^{-1}\,.
\end{align*}
Applying Lemma~\ref{lem: hessians_pi} and the Brascamp--Lieb inequality (Lemma~\ref{lem:key}),
\begin{align*}
    \nabla^2 \psi_\eps(Y)
    &= \eps^{-1} \cov_{X\sim \pi_\eps^Y}(X) \\
    &\preceq \eps^{-1} \E_{X\sim\pi_\eps^Y}\bigl[\bigl( \eps^{-1} \, \nabla^2 \varphi_\eps(X) + \nabla^2 V(X) \bigr)^{-1} \bigr] \\
    &\preceq \E_{X\sim\pi_\eps^Y}\bigl[\bigl( \nabla^2 \varphi_\eps(X) + \eps \alpha_V I \bigr)^{-1} \bigr]\,.
\end{align*}
Combining the two inequalities and setting
\begin{align*}
    \ell_\eps
    &:= \inf_{x\in\Omega_P} \lambda_{\min}\bigl(\nabla^2 \varphi_\eps(x)\bigr)\,,
\end{align*}
we deduce that
\begin{align*}
    \ell_\eps
    &\ge \bigl((\ell_\eps + \eps \alpha_V)^{-1} + \eps \beta_W\bigr)^{-1}\,.
\end{align*}
On the other hand, from Lemma~\ref{lem: hessians_pi}, we know that $\ell_\eps \ge 0$.
Solving the inequality then yields
\begin{align*}
    \ell_\eps
    &\ge \frac{1}{2} \, \bigl(\sqrt{4\alpha_V/\beta_W +\eps^2 \alpha_V^2} - \eps \alpha_V\bigr)\,. \qedhere
\end{align*}
\end{proof}

Next, we rigorously deduce Caffarelli's contraction theorem from Theorem~\ref{thm:main}.

\begin{proof}[Proof of Caffarelli's contraction (Theorem~\ref{thm: caf_con})]
    For every $\varepsilon > 0$, by Theorem~\ref{thm:main}, we have proven that $\nabla^2 \varphi_\varepsilon \preceq L_\varepsilon I$, with $L_\eps$ as in~\eqref{eq:Leps_bd}. Equivalently, this can be reformulated as saying that $\frac{L_\varepsilon \, \norm \cdot^2}{2} - \varphi_\varepsilon$ is convex.
    Fix some $\delta > 0$; in particular, for $\varepsilon$ sufficiently small, $\frac{(\sqrt{\beta_V/\alpha_W} + \delta) \, \norm \cdot^2}{2} - \varphi_\varepsilon$ is convex.
    
    Upon passing to a sequence $\varepsilon_k \searrow 0$, existing results on the convergence of entropic optimal transport potentials show that $\varphi_{\varepsilon_k} \to \varphi_0$ in $L^1(P)$ (see~\cite{nutz2021entropic}). Passing to a further subsequence, we obtain $\varphi_{\varepsilon_k} \to \varphi_0$ ($P$-almost surely). It follows that $\frac{(\sqrt{\beta_V/\alpha_W} + \delta) \, \norm \cdot^2}{2} - \varphi_0$ is convex for every $\delta > 0$ (see the remark after~\cite[Theorem 25.7]{rockafellar1997convexanalysis}), and thus for $\delta = 0$ as well.
\end{proof}

\begin{remark}
    Our main theorem provides both upper and lower bounds for $\nabla^2 \varphi_\eps$.
    In the case when $\eps = 0$, the lower bound follows from the upper bound.
    Indeed, if $\varphi_0$ is the Brenier potential for the optimal transport from $P$ to $Q$, then the convex conjugate $\varphi_0^*$ is the Brenier potential for the optimal transport from $Q$ to $P$. By applying Caffarelli's contraction theorem to $\varphi_0^*$ and appealing to convex duality, it yields a lower bound on $\nabla^2 \varphi_0$.
    However, we are not aware of a method of deducing the lower bound from the upper bound for positive values of $\eps$.
\end{remark}

\begin{remark}
    In Appendix~\ref{sec: gaussian_case}, by inspecting the Gaussian case, we show that Theorem~\ref{thm:main} is sharp for every $\eps > 0$.
\end{remark}

An inspection of the proof of the upper bound in Theorem~\ref{thm:main} reveals the following more general pair of inequalities.

\begin{proposition}\label{prop:main}
    Let $(P, Q)$ be probability measures satisfying our regularity conditions.
    Then, for all $x \in \R^d$ and $y \in \Omega_Q$,
    \begin{align*}
        \nabla^2 \varphi_\eps(x)
        &\preceq \E_{Y\sim \pi_\eps^x}\bigl[\bigl( \nabla^2 \psi_\eps(Y) + \eps \, \nabla^2 W(Y)\bigr)^{-1}\bigr]\,, \\
        \nabla^2 \psi_\eps(y)
        &\succeq \bigl(\E_{X\sim\pi_\eps^y}\bigl[ \nabla^2 \varphi_\eps(X) + \eps \, \nabla^2 V(X)\bigr] \bigr)^{-1}\,.
    \end{align*}
\end{proposition}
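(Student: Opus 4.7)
The plan is to revisit the chain of inequalities used in the upper-bound part of the proof of Theorem~\ref{thm:main}, but to stop one step earlier: specifically, to refrain from absorbing $\nabla^2 V$ and $\nabla^2 W$ into the scalar bounds $\beta_V I$ and $\alpha_W I$, and instead keep them as the matrices they are. In other words, both claimed inequalities are already sitting inside the displayed computations in that earlier proof; the proposition simply records them as standalone statements.

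Concretely, for the first inequality I would start from the identity
\[
    \nabla^2 \varphi_\eps(x) = \eps^{-1}\,\cov_{Y \sim \pi_\eps^x}(Y)
\]
of Lemma~\ref{lem: hessians_pi}, and then apply the Brascamp--Lieb inequality (part~1 of Lemma~\ref{lem:key}) to the measure $\pi_\eps^x$, whose negative log-density has Hessian $\eps^{-1}\,\nabla^2\psi_\eps(y) + \nabla^2 W(y)$ (again by Lemma~\ref{lem: hessians_pi}). Using the algebraic identity $\eps^{-1}\,(\eps^{-1} A + B)^{-1} = (A + \eps B)^{-1}$ to pull the prefactor $\eps^{-1}$ inside the inverse then yields exactly the stated bound. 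For the second inequality I would run the symmetric calculation: use Lemma~\ref{lem: hessians_pi} to rewrite $\nabla^2 \psi_\eps(y) = \eps^{-1}\cov_{X \sim \pi_\eps^y}(X)$, apply the Cram\'er--Rao inequality (part~2 of Lemma~\ref{lem:key}) together with the Hessian formula for $\log(1/\pi_\eps^y)$, and again absorb the $\eps^{-1}$ factor into the matrix inverse.

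The only subtlety worth flagging is that the Brascamp--Lieb step requires $\pi_\eps^x$ to be strictly log-concave. Since $\nabla^2 \psi_\eps \succeq 0$ by Lemma~\ref{lem: hessians_pi}, this is automatic as soon as $\nabla^2 W \succeq 0$; hence the first inequality of the proposition should be read under the implicit hypothesis that $Q$ is log-concave, which matches the regime in which the proposition is intended to be applied. No such caveat is needed for the second inequality, since the Cram\'er--Rao bound holds for any sufficiently regular density.
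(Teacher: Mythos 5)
Your proposal is correct and is exactly the paper's argument: the paper gives no separate proof, noting only that the two inequalities are the intermediate steps in the upper-bound part of the proof of Theorem~\ref{thm:main} before the scalar bounds $\beta_V I$ and $\alpha_W I$ are substituted, which is precisely what you do (Lemma~\ref{lem: hessians_pi}, Brascamp--Lieb for the first bound, Cram\'er--Rao for the second, and pulling $\eps^{-1}$ inside the inverse). Your caveat that the Brascamp--Lieb step needs $\pi_\eps^x$ to be strictly log-concave (automatic when $\nabla^2 W \succeq 0$, since in fact $\nabla^2\psi_\eps \succ 0$ as a nondegenerate covariance) is a fair reading of a hypothesis the paper leaves implicit and satisfies wherever it applies the proposition.
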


In the next section, we use these inequalities to prove a generalization of Caffarelli's  theorem.
\section{A generalization to commuting positive definite matrices}\label{scn:commuting_matrices}

In the next result, we replace the main assumptions of Caffarelli's contraction theorem, namely $\nabla^2 V \preceq \beta_V I$ and $\nabla^2 W \succeq \alpha_W I$, by the conditions
\begin{align}\label{eq:general_matrices}
    \nabla^2 V \preceq A^{-1} \qquad\text{and}\qquad \nabla^2 W \succeq B^{-1}\,,
\end{align}
where $A$ and $B$ are commuting positive definite matrices.
Recall that the Hessian of the Brenier potential between the Gaussian distributions $\mc N(0, A)$ and $\mc N(0,B)$ is the matrix $A^{-1/2} B^{1/2}$~\cite{gelbrich1990formula}. In light of this observation, the following theorem is sharp for every pair of commuting positive definite $(A,B)$, and shows that the Brenier potential between Gaussians achieves the largest possible Hessian among all source and target measures obeying the constraint~\eqref{eq:general_matrices}.

\begin{theorem}\label{thm:commuting_matrices}
    Let $(P,Q)$ satisfy our regularity conditions as well as the condition~\eqref{eq:general_matrices}. Then, the Hessian of the Brenier potential satisfies the uniform bound: for all $x\in \R^d$, it holds that
    \begin{align*}
        \nabla^2 \varphi_0(x)
        \preceq A^{-1/2} B^{1/2}\,.
    \end{align*}
\end{theorem}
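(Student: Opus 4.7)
The plan is to run the matrix-valued analogue of the entropic fixed-point argument in the proof of Theorem~\ref{thm:main} and then to send $\eps \searrow 0$ exactly as in the deduction of Caffarelli's contraction theorem above.

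First, I would plug the hypotheses $\nabla^2 V \preceq A^{-1}$ and $\nabla^2 W \succeq B^{-1}$ into Proposition~\ref{prop:main} and, using operator monotonicity of matrix inversion, chain the two inequalities to obtain
\begin{align*}
    \nabla^2 \varphi_\eps(x)
    \preceq \E_{Y \sim \pi_\eps^x}\Bigl[\Bigl(\bigl(\E_{X \sim \pi_\eps^Y}[\nabla^2 \varphi_\eps(X)] + \eps A^{-1}\bigr)^{-1} + \eps B^{-1}\Bigr)^{-1}\Bigr]\,.
\end{align*}
Define the map $F(M) := ((M + \eps A^{-1})^{-1} + \eps B^{-1})^{-1}$ on the positive cone; $F$ is operator monotone, so if $\nabla^2 \varphi_\eps \preceq M$ uniformly on $\R^d$ then $\nabla^2 \varphi_\eps \preceq F(M)$ uniformly as well. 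Note that $F$ preserves the commutant of $\{A, B\}$.

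Next, I would seed the iteration with the uniform scalar bound $M_0 := L_\eps I$ obtained by applying Theorem~\ref{thm:main} with $\beta_V = 1/\lambda_{\min}(A)$ and $\alpha_W = 1/\lambda_{\max}(B)$ (both finite and positive). Iterating $M_{k+1} := F(M_k)$, the iterates all commute with $A$ and $B$ and hence diagonalize in their joint eigenbasis, so the fixed-point equation reduces on each joint eigenspace (where $A, B$ act as scalars $a, b$) to the scalar quadratic $a\lambda^2 + \eps \lambda - b = 0$, with unique positive root $\lambda_*(a, b) = \tfrac{1}{2a}(\sqrt{\eps^2 + 4ab} - \eps)$. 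A brief analysis of the scalar recursion (it is increasing, concave, and attracting at $\lambda_*$; and $L_\eps = \lambda_*(\lambda_{\min}(A), \lambda_{\max}(B)) \ge \lambda_*(a, b)$ for every joint eigenvalue) yields monotone convergence $M_k \searrow \Lambda_\eps$, where
\begin{align*}
    \Lambda_\eps := \tfrac{1}{2} A^{-1}\bigl(\sqrt{\eps^2 I + 4 AB} - \eps I\bigr)\,,
\end{align*}
defined via the continuous functional calculus on commuting operators. Hence $\nabla^2 \varphi_\eps \preceq \Lambda_\eps$ for every $\eps > 0$.

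Finally, I would pass to the limit $\eps \searrow 0$: commutativity of $A$ and $B$ gives $\Lambda_\eps \to A^{-1/2} B^{1/2}$, and an argument identical to the proof of Caffarelli's theorem in the excerpt---combining the $L^1(P)$ (hence a.s.\ along a subsequence) convergence $\varphi_{\eps_k} \to \varphi_0$ from~\cite{nutz2021entropic} with the Rockafellar stability of convex functions---yields $\nabla^2 \varphi_0 \preceq A^{-1/2} B^{1/2}$. The principal obstacle is the matrix-valued fixed-point step; commutativity of $A$ and $B$ is essential because it confines the iterates to a commutative algebra and reduces the otherwise noncommutative matrix equation to a scalar quadratic on each joint eigenspace, producing the clean closed-form expression for $\Lambda_\eps$.
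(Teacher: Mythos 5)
Your proposal is correct, but it takes a genuinely different route from the paper at the crucial finite-$\eps$ step. The paper defines the deficit $C_\eps$ as the smallest constant with $\nabla^2\varphi_\eps \preceq A^{-1/2}B^{1/2} + C_\eps I$, evaluates the chained inequality from Proposition~\ref{prop:main} at an extremizing pair $(x,e)$, and Taylor-expands $\bigl((M_\eps+\eps A^{-1})^{-1}+\eps B^{-1}\bigr)^{-1}$ in $\eps$ (commutativity enters when computing $M_\eps B^{-1}M_\eps$) to conclude $C_\eps \to 0$, after which it passes to the limit exactly as you do. You instead iterate the operator-monotone map $F(M) = \bigl((M+\eps A^{-1})^{-1}+\eps B^{-1}\bigr)^{-1}$ starting from the scalar bound of Theorem~\ref{thm:main}; commutativity confines the iterates to the commutative algebra generated by $A$, $B$, and $I$, reduces the fixed-point equation to the scalar quadratic $a\lambda^2+\eps\lambda-b=0$ on each joint eigenspace, and yields the explicit bound $\nabla^2\varphi_\eps \preceq \tfrac12 A^{-1}\bigl(\sqrt{\eps^2 I + 4AB}-\eps I\bigr)$ for every $\eps>0$. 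This buys you more than the paper's proof does: a sharp quantitative bound at each positive regularization level (it matches the Gaussian entropic Hessian of Appendix~\ref{sec: gaussian_case} when $A$ and $B$ commute), whereas the paper only shows that the excess over $A^{-1/2}B^{1/2}$ vanishes as $\eps\searrow 0$; the paper's argument is in turn shorter, avoiding the fixed-point analysis altogether. Two small points of hygiene in your write-up: Theorem~\ref{thm:main} gives $L_\eps \le \lambda_*(\lambda_{\min}(A),\lambda_{\max}(B))$, not equality, so either seed the iteration with $\lambda_*(\lambda_{\min}(A),\lambda_{\max}(B))\,I$ (then the iterates genuinely decrease) or simply drop the monotonicity claim and note that the scalar recursion converges to $\lambda_*(a,b)$ from any nonnegative starting point; all that is actually needed is that each $M_k$ dominates $\nabla^2\varphi_\eps$ uniformly and that $M_k \to \Lambda_\eps$, after which the limit $\eps\searrow 0$ proceeds verbatim as in the paper's deduction of Caffarelli's theorem.
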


As in Theorem~\ref{thm:main}, the proof technique also yields a lower bound on $\nabla^2 \varphi_0$ under appropriate assumptions. We omit this result because it is straightforward.

\begin{proof}
Let $C_\varepsilon$ be the smallest constant $C \ge 0$ such that $\nabla^2 \varphi_\eps(x) \preceq A^{-1/2} B^{1/2} + C I$ for all $x\in \R^d$. In light of Theorem~\ref{thm:main}, $C_\eps$ is well-defined and finite.
Equivalently,
\begin{align*}
    C_\eps
    &= \sup_{x\in\R^d} \sup_{e\in \R^d, \; \norm e = 1}{\bigl\langle e, [\nabla^2 \varphi_\eps(x) - A^{-1/2} B^{1/2}] \, e\bigr\rangle}\,.
\end{align*}
Let $(x, e)$ achieve the above supremum. (If the supremum is not attained, then the rest of the proof goes through with minor modifications.)

Using our assumptions and Proposition~\ref{prop:main}, we obtain
\begin{align*}
    C_\eps
    &= \bigl\langle e, [\nabla^2 \varphi_\eps(x) - A^{-1/2} B^{1/2}] \, e \bigr\rangle \\
    &\le \Bigl\langle e, \Bigl[ \bigl(( A^{-1/2} B^{1/2} + C_\eps I + \eps A^{-1})^{-1} + \eps B^{-1}\bigr)^{-1} - A^{-1/2} B^{1/2} \Bigr] \, e \Bigr\rangle\,.
\end{align*}
From our assumptions and Theorem~\ref{thm:main}, we know that the spectrum of $M_\eps := A^{-1/2} B^{1/2} + C_\eps I$ is bounded away from zero and infinity as $\eps \searrow 0$, which justifies the Taylor expansion
\begin{align*}
    \bigl(( M_\eps + \eps A^{-1})^{-1} + \eps B^{-1}\bigr)^{-1}
    &= \bigl( M_\eps^{-1} - \eps M_\eps^{-1} A^{-1} M_\eps^{-1} + \eps B^{-1} + O(\eps^2)\bigr)^{-1} \\
    &= M_\eps + \eps A^{-1} - \eps M_\eps B^{-1} M_\eps + O(\eps^2)\,.
\end{align*}
Hence,
\begin{align*}
    C_\eps
    &\le \bigl\langle e, \bigl[M_\eps + \eps A^{-1} - \eps M_\eps B^{-1} M_\eps + O(\eps^2) - A^{-1/2} B^{1/2}\bigr] \, e \bigr\rangle \\
    &\le C_\eps + \eps \, \bigl\langle e, [A^{-1} - M_\eps B^{-1} M_\eps] \, e \bigr\rangle + O(\eps^2) \\
    &= C_\eps + \eps \, \bigl\langle e, [C_\eps A^{-1/2} B^{-1/2} + C_\eps^2 B^{-1}] \, e \bigr\rangle + O(\eps^2)\,.
\end{align*}
This shows that $\lim_{\eps \searrow 0} C_\eps = 0$ (otherwise ${(C_\eps)}_{\eps > 0}$ would have a strictly positive cluster point which would contradict the above inequality for small enough $\eps > 0$).

By combining this fact with convergence of the entropic Brenier potentials as in the proof of Theorem~\ref{thm: caf_con}, we deduce the result.
\end{proof}

Next, we show how our theorem recovers and extends a result of Valdimarsson~\cite{valdimarsson2007otpotential}. Valdimarsson proves that if:
\begin{itemize}
    \item $\bar A$, $\bar B$, and $G$ are positive definite matrices;
    \item $\bar A \preceq G$ and $\bar B$ commutes with $G$;
    \item $P = \mc N(0, \bar B G^{-1}) * \mu$ where $*$ denotes convolution and $\mu$ is an arbitrary probability measure on $\R^d$; and
    \item $Q = \exp(-W)$ with $\nabla^2 W \succeq \bar B^{-1/2} \bar A^{-1}\bar B^{-1/2}$;
\end{itemize}
then the Brenier potential satisfies $\nabla^2 \varphi_0 \preceq G$. This result was then used to derive new forms of the Brascamp--Lieb inequality.\footnote{This is a different Brascamp--Lieb inequality than the one in Lemma~\ref{lem:key}.}

To prove this result, we first check that convolution with any probability measure only makes the density more log-smooth.

\begin{lemma}
    Let $\widetilde P \propto \exp(-\widetilde V)$ be a probability measure, where $\widetilde V : \R^d\to\R$ is twice continuously differentiable.
    Let $P := \widetilde P * \mu = \exp(-V)$ where $\mu$ is any probability measure on $\R^d$.
    Suppose that for some positive definite matrix $A^{-1}$, we have $\nabla^2 \widetilde V \preceq A^{-1}$.
    Then, $\nabla^2 V \preceq A^{-1}$ as well.
\end{lemma}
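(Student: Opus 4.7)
The plan is to compute $\nabla^2 V$ directly and identify a favorable covariance term. Writing $\widetilde P$ as having density $\exp(-\widetilde V)$, the density of $P = \widetilde P * \mu$ is
\begin{equation*}
p(x) = \int \exp\bigl(-\widetilde V(x-y)\bigr) \, \D \mu(y)\,,
\end{equation*}
so that $V(x) = -\log p(x)$. For each $x\in\R^d$, define the probability measure $q_x$ on $\R^d$ by
\begin{equation*}
q_x(\D y) \;\propto\; \exp\bigl(-\widetilde V(x-y)\bigr) \, \D \mu(y)\,,
\end{equation*}
which is well-defined since the integrand is positive and $p(x) < \infty$.

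Differentiating once under the integral yields $\nabla V(x) = \E_{Y \sim q_x}[\nabla \widetilde V(x-Y)]$. Differentiating a second time (treating $\log p$ as the log-partition function of the exponential family $\{q_x\}_{x}$) gives the identity
\begin{equation*}
\nabla^2 V(x) = \E_{Y \sim q_x}\bigl[\nabla^2 \widetilde V(x-Y)\bigr] - \cov_{Y \sim q_x}\bigl(\nabla \widetilde V(x-Y)\bigr)\,.
\end{equation*}
This is the key identity; I would verify it by carrying out the standard log-derivative calculation, noting that the first term comes from differentiating $\nabla \widetilde V(x-Y)$ in $x$, while the covariance appears from differentiating the normalizing constant of $q_x$.

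Once this identity is in hand, the conclusion is immediate: the covariance term is positive semidefinite and hence subtracts something $\succeq 0$, while the hypothesis $\nabla^2 \widetilde V \preceq A^{-1}$ gives $\E_{Y \sim q_x}[\nabla^2 \widetilde V(x-Y)] \preceq A^{-1}$. Combining,
\begin{equation*}
\nabla^2 V(x) \preceq \E_{Y \sim q_x}\bigl[\nabla^2 \widetilde V(x-Y)\bigr] \preceq A^{-1}\,.
\end{equation*}

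There is no real obstacle here beyond a clean bookkeeping of the differentiation under the integral; the only mild care needed is justifying the interchange of differentiation and integration, which follows from local integrability and dominated convergence using the continuity of $\widetilde V$ and $\nabla^2 \widetilde V$, together with the fact that $p(x) > 0$ everywhere. Once written cleanly, the proof is essentially the one-line observation that taking a mixture of log-smooth densities can only decrease the Hessian of the negative log-density by a covariance term.
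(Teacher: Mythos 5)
Your proposal is correct and is essentially the same argument as the paper's: you define the same tilted measure (the paper's $\nu_y$ is your $q_x$), derive the identical identity $\nabla^2 V = \E[\nabla^2 \widetilde V(\cdot - Y)] - \cov(\nabla \widetilde V(\cdot - Y))$, and conclude by dropping the positive semidefinite covariance term. The only addition is your (reasonable) remark about justifying differentiation under the integral, which the paper leaves implicit.
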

\begin{proof}
    An elementary computation shows that if we define the probability measure
    \begin{align*}
        \nu_y(\D x)
        &:= \frac{\exp(-\widetilde V(y-x)) \, \mu(\D x)}{\int \exp(-\widetilde V(y-x')) \, \mu(\D x')}
    \end{align*}
    then
    \begin{align*}
        \nabla^2 V(y)
        &= \E_{X \sim \nu_y}\bigl[\nabla^2 \widetilde V(y-X)\bigr] - \cov_{X\sim \nu_y}\bigl(\nabla \widetilde V(y-X)\bigr)\,,
    \end{align*}
    from which the result follows.
\end{proof}

From the lemma, we deduce that under Valdimarsson's assumptions, for $P = \exp(-V)$, we have $\nabla^2 V \preceq \bar B^{-1} G$.
Also, $\nabla^2 W \succeq \bar B^{-1/2} \bar A^{-1} \bar B^{-1/2} \succeq \bar B^{-1} G^{-1}$. By Theorem~\ref{thm:commuting_matrices}, the Brenier potential $\varphi_0$ satisfies $\nabla^2 \phi_0 \preceq G$. However, it is seen that our argument yields much more. For example, rather than requiring $P$ to be a convolution with a Gaussian measure, we can allow $P$ to be a convolution with any measure $\exp(-\widetilde V)$ satisfying $\nabla^2 \widetilde V \preceq \bar B^{-1} G$.

\begin{remark}
    It is natural to ask whether Theorem~\ref{thm:commuting_matrices} can be obtained by first applying Caffarelli's contraction theorem to show that the optimal transport map $\widetilde T_0$ between the measures $(A^{-1/2})_\sharp P$ and $(B^{-1/2})_\sharp Q$ is $1$-Lipschitz, and then considering the mapping $T_0(x) := B^{1/2} \widetilde T_0(A^{-1/2} x)$. Although $T_0$ is indeed a valid transport mapping from $P$ to $Q$, under our assumptions $\nabla T_0$ is not guaranteed to be symmetric, so it does not make sense to ask whether or not $\nabla T_0 \preceq A^{-1/2} B^{1/2}$.
    
    In Valdimarsson's application to Brascamp--Lieb inequalities, it is crucial that the transport map $T_0$ is chosen so that $\nabla T_0$ is a symmetric positive definite matrix. Symmetry of $\nabla T_0$ implies that $T_0$ is the gradient $\nabla \phi_0$ of a function $\phi_0 : \R^d\to\R$, and positive definiteness implies that $\phi_0$ is convex. By Brenier's theorem, the unique gradient of a convex function that pushes forward $P$ to $Q$ is the optimal transport map. Thus, it is crucial that we consider the \emph{optimal} transport map here; in particular, alternative maps such as the ones in~\cite{kim2012generalization, mikulincer2021brownian} cannot be applied.
\end{remark}
\section{Discussion}

We have proven a generalization of Caffarelli's celebrated theorem on the Lipschitz properties of the optimal transport map to the setting of entropic optimal transport using two complementary covariance inequalities (the Brascamp--Lieb inequality and the Cram\'er--Rao inequality).

We conjecture that our proof technique can also be used to recover the bounds on the moment measure mapping in~\cite{klartag2014momentmeasure}, provided that the existence of an ``entropic moment measure'' can be established (with convergence towards the true moment measure as the regularization tends to zero). As this is outside the scope of this work, we do not pursue this question here.

\paragraph*{Acknowledgements.}
We thank Ramon van Handel, Jonathan Niles-Weed, and Philippe Rigollet for helpful comments. This work was completed while SC was visiting NYU\@. SC was supported by
the Department of Defense (DoD) through the National Defense Science \& Engineering Graduate Fellowship (NDSEG) Program. AAP was partially supported by the Natural Sciences and Engineering Research Council of
Canada.

\bibliography{workingtitle}

\newcommand{\etalchar}[1]{$^{#1}$}
\begin{thebibliography}{ACLGP20}

\bibitem[ACGS21]{altschuler2021averaging}
Jason Altschuler, Sinho Chewi, Patrik Gerber, and Austin Stromme.
\newblock Averaging on the {B}ures--{W}asserstein manifold: dimension-free
  convergence of gradient descent.
\newblock {\em Advances in Neural Information Processing Systems}, 34, 2021.

\bibitem[ACLGP20]{ahidarlegouicparis2020barycenters}
Adil Ahidar-Coutrix, Thibaut Le~Gouic, and Quentin Paris.
\newblock Convergence rates for empirical barycenters in metric spaces:
  curvature, convexity and extendable geodesics.
\newblock {\em Probab. Theory Related Fields}, 177(1-2):323--368, 2020.

\bibitem[BGL14]{bakrygentilledoux2014}
Dominique Bakry, Ivan Gentil, and Michel Ledoux.
\newblock {\em Analysis and geometry of {M}arkov diffusion operators}, volume
  348 of {\em Grundlehren der Mathematischen Wissenschaften [Fundamental
  Principles of Mathematical Sciences]}.
\newblock Springer, Cham, 2014.

\bibitem[BL00]{bobkovledoux2000brunnmintobrascamplieblsi}
Sergey~G. Bobkov and Michel Ledoux.
\newblock From {B}runn--{M}inkowski to {B}rascamp--{L}ieb and to logarithmic
  {S}obolev inequalities.
\newblock {\em Geom. Funct. Anal.}, 10(5):1028--1052, 2000.

\bibitem[Caf00]{caffarelli2000monotonicity}
Luis~A. Caffarelli.
\newblock Monotonicity properties of optimal transportation and the {FKG} and
  related inequalities.
\newblock {\em Communications in Mathematical Physics}, 214(3):547--563, 2000.

\bibitem[CE17]{cordero2017transport}
Dario Cordero-Erausquin.
\newblock Transport inequalities for log-concave measures, quantitative forms,
  and applications.
\newblock {\em Canad. J. Math.}, 69(3):481--501, 2017.

\bibitem[CFJ17]{colombo2015lipschitz}
Maria Colombo, Alessio Figalli, and Yash Jhaveri.
\newblock Lipschitz changes of variables between perturbations of log-concave
  measures.
\newblock {\em Ann. Sc. Norm. Super. Pisa Cl. Sci. (5)}, 17(4):1491--1519,
  2017.

\bibitem[CMRS20]{chewietal2020buresgd}
Sinho Chewi, Tyler Maunu, Philippe Rigollet, and {Austin J.} Stromme.
\newblock Gradient descent algorithms for {B}ures-{W}asserstein barycenters.
\newblock In Jacob Abernethy and Shivani Agarwal, editors, {\em Proceedings of
  Thirty Third Conference on Learning Theory}, volume 125 of {\em Proceedings
  of Machine Learning Research}, pages 1276--1304. PMLR, 09--12 Jul 2020.

\bibitem[Csi75]{Csi75}
I.~Csisz\'{a}r.
\newblock {$I$}-divergence geometry of probability distributions and
  minimization problems.
\newblock {\em Ann. Probability}, 3:146--158, 1975.

\bibitem[Cut13]{cuturi2013sinkhorn}
Marco Cuturi.
\newblock Sinkhorn distances: lightspeed computation of optimal transport.
\newblock {\em Advances in neural information processing systems}, 26, 2013.

\bibitem[FGP20]{fathi2020proof}
Max Fathi, Nathael Gozlan, and Maxime Prod’homme.
\newblock A proof of the {C}affarelli contraction theorem via entropic
  regularization.
\newblock {\em Calculus of Variations and Partial Differential Equations},
  59(3):1--18, 2020.

\bibitem[Gel90]{gelbrich1990formula}
Matthias Gelbrich.
\newblock On a formula for the {$L^2$} {W}asserstein metric between measures on
  {E}uclidean and {H}ilbert spaces.
\newblock {\em Mathematische Nachrichten}, 147(1):185--203, 1990.

\bibitem[GJ20]{gozlanjuillet2020weakot}
Nathael Gozlan and Nicolas Juillet.
\newblock On a mixture of {B}renier and {S}trassen theorems.
\newblock {\em Proc. Lond. Math. Soc. (3)}, 120(3):434--463, 2020.

\bibitem[GLRT20]{gentiletal2020entropichwi}
Ivan Gentil, Christian L\'{e}onard, Luigia Ripani, and Luca Tamanini.
\newblock An entropic interpolation proof of the {HWI} inequality.
\newblock {\em Stochastic Process. Appl.}, 130(2):907--923, 2020.

\bibitem[HR21]{hutter2021minimax}
Jan-Christian H{\"u}tter and Philippe Rigollet.
\newblock Minimax estimation of smooth optimal transport maps.
\newblock {\em The Annals of Statistics}, 49(2):1166--1194, 2021.

\bibitem[JMPC20]{janati2020entropic}
Hicham Janati, Boris Muzellec, Gabriel Peyr{\'e}, and Marco Cuturi.
\newblock Entropic optimal transport between unbalanced {G}aussian measures has
  a closed form.
\newblock {\em Advances in Neural Information Processing Systems}, 33, 2020.

\bibitem[Kla14]{klartag2014momentmeasure}
Bo'az Klartag.
\newblock Logarithmically-concave moment measures {I}.
\newblock In {\em Geometric aspects of functional analysis}, volume 2116 of
  {\em Lecture Notes in Math.}, pages 231--260. Springer, Cham, 2014.

\bibitem[KM12]{kim2012generalization}
Young-Heon Kim and Emanuel Milman.
\newblock A generalization of {C}affarelli’s contraction theorem via
  (reverse) heat flow.
\newblock {\em Mathematische Annalen}, 354(3):827--862, 2012.

\bibitem[Led18]{ledoux2018remarks}
Michel Ledoux.
\newblock Remarks on some transportation cost inequalities, 2018.

\bibitem[L{\'e}o14]{leonard2014schrodinger}
Christian L{\'e}onard.
\newblock A survey of the {S}chr{\"o}dinger problem and some of its connections
  with optimal transport.
\newblock {\em Discrete Contin. Dyn. Syst.}, 34(4):1533--1574, 2014.

\bibitem[LPRS19]{legouicetal2019fast}
Thibaut {Le Gouic}, Quentin {Paris}, Philippe {Rigollet}, and Austin~J.
  {Stromme}.
\newblock {Fast convergence of empirical barycenters in {A}lexandrov spaces and
  the {W}asserstein space}.
\newblock {\em arXiv e-prints}, August 2019.

\bibitem[MGM21]{mallasto2021entropy}
Anton Mallasto, Augusto Gerolin, and H{\`a}~Quang Minh.
\newblock Entropy-regularized 2-{W}asserstein distance between {G}aussian
  measures.
\newblock {\em Information Geometry}, pages 1--35, 2021.

\bibitem[MNW19]{mena2019statistical}
Gonzalo Mena and Jonathan Niles-Weed.
\newblock Statistical bounds for entropic optimal transport: sample complexity
  and the central limit theorem.
\newblock {\em Advances in Neural Information Processing Systems}, 32, 2019.

\bibitem[MS21]{mikulincer2021brownian}
Dan Mikulincer and Yair Shenfeld.
\newblock The {B}rownian transport map.
\newblock {\em arXiv e-prints}, 2021.

\bibitem[MS22]{mikulincer2022lipschitz}
Dan Mikulincer and Yair Shenfeld.
\newblock On the {L}ipschitz properties of transportation along heat flows.
\newblock {\em arXiv e-prints}, 2022.

\bibitem[Nee22]{neeman2022lipschitz}
Joe Neeman.
\newblock Lipschitz changes of variables via heat flow.
\newblock {\em arXiv preprint arXiv:2201.03403}, 2022.

\bibitem[NW21]{nutz2021entropic}
Marcel Nutz and Johannes Wiesel.
\newblock Entropic optimal transport: convergence of potentials.
\newblock {\em Probability Theory and Related Fields}, pages 1--24, 2021.

\bibitem[PC19]{PeyCut19}
Gabriel Peyr{\'e} and Marco Cuturi.
\newblock Computational optimal transport.
\newblock {\em Foundations and Trends{\textregistered} in Machine Learning},
  11(5-6):355--607, 2019.

\bibitem[PCNW22]{pooladian2022debiaser}
Aram-Alexandre Pooladian, Marco Cuturi, and Jonathan Niles-Weed.
\newblock Debiaser beware: pitfalls of centering regularized transport maps.
\newblock {\em arXiv preprint arXiv:2202.08919}, 2022.

\bibitem[PNW21]{pooladian2021entropic}
Aram-Alexandre Pooladian and Jonathan Niles-Weed.
\newblock Entropic estimation of optimal transport maps.
\newblock {\em arXiv preprint arXiv:2109.12004}, 2021.

\bibitem[Roc97]{rockafellar1997convexanalysis}
R.~Tyrrell Rockafellar.
\newblock {\em Convex analysis}.
\newblock Princeton Landmarks in Mathematics. Princeton University Press,
  Princeton, NJ, 1997.
\newblock Reprint of the 1970 original, Princeton Paperbacks.

\bibitem[SDF{\etalchar{+}}18]{seguy2017large}
Vivien Seguy, Bharath~B. Damodaran, R{\'e}mi Flamary, Nicolas Courty, Antoine
  Rolet, and Mathieu Blondel.
\newblock Large-scale optimal transport and mapping estimation.
\newblock In {\em International Conference on Learning Representations}, 2018.

\bibitem[Val07]{valdimarsson2007otpotential}
Stef\'{a}n~Ingi Valdimarsson.
\newblock On the {H}essian of the optimal transport potential.
\newblock {\em Ann. Sc. Norm. Super. Pisa Cl. Sci. (5)}, 6(3):441--456, 2007.

\bibitem[Vil03]{villani2003topics}
C\'{e}dric Villani.
\newblock {\em Topics in optimal transportation}, volume~58 of {\em Graduate
  Studies in Mathematics}.
\newblock American Mathematical Society, Providence, RI, 2003.

\end{thebibliography}
\bibliographystyle{alpha}

\newpage
\appendix
\section{Proof of the Cram{\'e}r--Rao lower bound}\label{sec:cr_proof}

\begin{proof}[Proof of Lemma~\ref{lem:key}, Cram\'er--Rao inequality]
For any smooth and compactly supported test function $h : \R^d\to\R$, integration by parts yields
\begin{align*}
    \E_P \nabla h
    &= \int \nabla h \, \D P
    = -\int (h \, \nabla \ln P) \, \D P
    = \int (h - \E_P h) \, \nabla V \, \D P
\end{align*}
where we used the fact that $\E_P \nabla \ln P = 0$.
Therefore,
\begin{align}\label{eq:cr1}
    \langle \E_P \nabla h, (\E_P \nabla^2 V)^{-1} \, \E_P \nabla h\rangle
    &= \int (h - \E_P h) \, \langle \nabla V, (\E_P \nabla^2 V)^{-1} \, \E_P \nabla h \rangle \, \D P\,.
\end{align}
Applying the Cauchy--Schwarz inequality,
\begin{align*}
    \eqref{eq:cr1}
    &\le \sqrt{(\var_P h) \int \langle \E_P \nabla h, {(\E_P \nabla^2 V)}^{-1} \, {(\nabla V)}^{\otimes 2} \, {(\E_P \nabla^2 V)}^{-1}\, \E_P \nabla h \rangle \, \D P}\,.
\end{align*}
Integration by parts shows that $\int \nabla V^{\otimes 2} \, \D P = \int \nabla^2 V \, \D P$, and upon rearranging we deduce that
\begin{align}\label{eq:cr2}
    \var_P h
    &\ge \langle \E_P \nabla h, (\E_P \nabla^2 V)^{-1} \, \E_P \nabla h\rangle\,.
\end{align}
By approximation, this continues to hold for any locally Lipschitz $h : \R^d\to\R$ with $\E_P\norm{\nabla h} < \infty$.

Specializing the inequality~\eqref{eq:cr2} to $h := \langle e, \cdot\rangle$ for a unit vector $e\in\R^d$ then recovers the Cram\'er--Rao inequality of Lemma~\ref{lem:key}.
\end{proof}

\section{Gaussian case}\label{sec: gaussian_case}

Suppose $P = \mathcal{N}(0,A)$ and $Q = \mathcal{N}(0,B)$ are Gaussians.
Then, it is known that the Hessian of the Brenier potential is given by \cite{gelbrich1990formula}
\begin{align*}
    \nabla^2\varphi_0(x) = A^{-1/2} \, {(A^{1/2}BA^{1/2})}^{1/2} \,A^{-1/2}\,.
\end{align*}
If we have
\begin{align*}
    A^{-1} \preceq \beta I \qquad\text{and}\qquad B^{-1} \succeq \alpha I \succ 0\,,
\end{align*}
then Caffarelli's contraction theorem (Theorem~\ref{thm: caf_con}) implies
\begin{align*}
    \|\nabla^2\phi_0\|_{\text{op}} \leq \sqrt{\beta/\alpha}\,.
\end{align*}
This matches the bound of~\cite[Lemma 2]{altschuler2021averaging}.

For $\eps > 0$, the upper bound from Theorem~\ref{thm:main} implies
\begin{align}\label{eq:gaussian_eps}
    \norm{\nabla^2 \phi_\eps}_{\rm op} \leq \frac{1}{2} \, \bigl( \sqrt{4\beta/\alpha + \eps^2 \beta^2} - \eps \beta \bigr)\,.
\end{align}
On the other hand, from~\cite{janati2020entropic, mallasto2021entropy}, it is known that
\begin{align*}
    \nabla^2 \phi_\eps(x)
    &= A^{-1/2} \, \bigl( A^{1/2} B A^{1/2} + \frac{\eps^2}{4} \, I\bigr)^{1/2} \, A^{-1/2} - \frac{\eps}{2} \, A^{-1}\,.
\end{align*}
In particular, if we take $A = \beta^{-1} I$ and $B = \alpha^{-1} I$, then~\eqref{eq:gaussian_eps} is an equality. Hence, Theorem~\ref{thm:main} is sharp for every $\eps > 0$.
\end{document}